\documentclass[11pt]{amsart}

\usepackage{amsfonts,amsthm,amsmath,enumerate,amssymb,latexsym,color,tcolorbox,tikz,mathrsfs,bm}
\usetikzlibrary{bending}
\usepackage[text={6in,9in},centering]{geometry}
\usepackage[colorlinks,
            linkcolor=blue,
            anchorcolor=red,
            citecolor=red]{hyperref}

\graphicspath{{figures/}}

\theoremstyle{plain}
\newtheorem{theorem}{Theorem}
\newtheorem{lemma}[theorem]{Lemma}

\newtheorem{conjecture}[theorem]{Conjecture}

\theoremstyle{definition}

\theoremstyle{remark}

\numberwithin{equation}{section}

%-------Math Operators-------

%------Letters------%
\def\R{\mathbb R}

\begin{document}

\title[A self-similar set with non-locally connected components]{A self-similar set with non-locally \\ connected components}

% \author{Huo-Jun Ruan}
% \address{}
% \email{}
\author{Jian-Ci Xiao}
\address{Department of Mathematics, The Chinese University of Hong Kong, Shatin, Hong Kong}
\email{jcxiao1994@gmail.com}

\subjclass[2010]{Primary 28A80; Secondary 54D05}

\keywords{}

\begin{abstract}
    % We construct a homogeneous self-similar set satisfying the convex open set condition such that there exists a connected component which is not locally connected. This disproves a conjecture of Luo, Rao and Xiong [Topol. Appl. {\bf 322} (2022), 108271].
    Luo, Rao and Xiong [Topol. Appl. {\bf 322} (2022), 108271] conjectured that if a planar self-similar iterated function system with the open set condition does not involve rotations or reflections, then every connected component of the attractor is locally connected. We create a homogeneous counterexample of Lalley-Gatzouras type, which disproves this conjecture.
\end{abstract}

\maketitle

%---------------------
\section{Introduction}

There have been a number of advances in the field of fractal geometry that improve our understanding of the fractal dimensions of fractal sets and measures, but much less is known about the topology. Even some fundamental topological properties such as the connectedness and local connectedness have not been well studied. Hata~\cite{Hat85} related the connectedness of a self-similar set to the connectedness of some graph structure of the corresponding iterated function system (or simply IFS). When the self-similar set enjoys nice separation conditions, the graph structure can be thoroughly identified. In fact, this often provides not only an approach for the verification of the connectedness (\cite{DLRWX21,DL11,RX22}), but also useful information about the connected components of the original set (\cite{LLR13,LX22,XX09,Xiao21}).

The local connectedness of fractal sets and their subsets is far more intricate and poorly understood. Recall that a space $X$ is said to be locally connected at $x\in X$ if for every neighborhood $U$ of $x$ , there exists a connected neighborhood $V$ of $x$ contained in $U$. If $X$ is locally connected at each of its points, it is said to be \emph{locally connected}. A pioneering result was given by Hata~\cite{Hat85} stating that if a self-similar set is connected, then it must be locally and path connected. In~\cite{LRX22}, Luo, Rao and Xiong further demonstrated that a self-similar set is locally connected if and only if it has only finitely many connected components. There also exists some work~\cite{Luo07,LRT02,NT04,Tan05,TZ24} concerning the local connectedness of self-similar or self-affine tiles and related sets, e.g., their boundaries, the closure of their disjoint interiors or their complements.

In addition to the local connectedness of a given self-similar set, it is also interesting to ask whether its connected components are locally connected. Luo, Rao and Xiong~\cite{LRX22} showed that every connected component of any fractal square (or generalized Sierpi\'nski carpet) is locally connected. They also provided an example where this property does not hold when the IFSs involve rotations. The idea can be summarized as follows: consider a carpet-like self-similar set that contains infinitely many vertical line segments, including the left and right sides of the unit square, and require these segments to belong to different components. Then add a pair of up-down adjacent copies to the original system, where the top copy is exactly a $90^\circ$ rotation of the bottom one. As a result, we have a horizontal line segment gluing together infinitely many vertical segments, and the component containing them cannot be locally connected.

%Existing researches focus on simple models such as carpet-like self-similar sets. For 

Since all the prior examples require the corresponding IFS to either involve rotations or to violate the open set condition (for the definition, please see~\cite{Fal14}), Luo, Rao and Xiong proposed the following conjecture.

\begin{conjecture}[\cite{LRX22}]
    Let $\Phi$ be a self-similar IFS on $\R^2$ and $K$ its attractor. If $\Phi$ satisfies the open set condition and if every similitude in $\Phi$ involves neither rotation nor reﬂection, then every component of $K$ is locally connected.
\end{conjecture}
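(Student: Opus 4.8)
The plan is to \emph{disprove} the conjecture by producing the explicit counterexample promised in the abstract. I would search for a homogeneous IFS $\Phi=\{\phi_1,\dots,\phi_N\}$ on $\R^2$ of Lalley--Gatzouras type: every $\phi_i$ is a homothety of a common ratio $r\in(0,1)$ (its linear part is $rI$, so there is neither rotation nor reflection), the cells $\phi_i([0,1]^2)$ are grouped into vertical columns and stacked within each column, and they have pairwise disjoint interiors, which gives the OSC on $(0,1)^2$. The aim is to arrange the column data so that one connected component $C$ of the attractor $K$ is a \emph{comb}: a sequence of vertical segments $T_1,T_2,\dots$ (the teeth) converging to a limit vertical segment $T_\infty$, together with a single horizontal segment $B$ (the base) meeting each $T_n$ and $T_\infty$ at exactly one endpoint. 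If the tips $p_n\in T_n$ far from $B$ converge to the corresponding tip $p$ of $T_\infty$, while every connected subset of $C$ joining a $p_n$ to $p$ is forced to descend to $B$ and climb back, then $p$ has no small connected neighbourhood in $C$, so $C$ fails to be locally connected at $p$.

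I would carry this out in the following order. First, realise the teeth as \emph{bare} segments, i.e.\ segments along which $K$ is genuinely one-dimensional with no transverse structure; the natural device is a subsystem whose attractor is a product $\mathcal{E}\times[0,1]$ of a Cantor-type set $\mathcal{E}$ with an interval, so that the fibres $\{c\}\times[0,1]$ are honest line segments carrying no sub-combs. Second, adjoin just enough maps to create the base $B$ gluing the fibres and to force the $T_n$ to accumulate onto $T_\infty$. Third, verify the easy hypotheses: by construction the maps are orientation-preserving homotheties (no rotation, no reflection) and the selected cells have disjoint interiors, so the OSC holds. Fourth, invoke the component-identification machinery for attractors with separation (the Hata graph together with the finer component analysis of \cite{LRX22,LX22}) to show that the comb is exactly one component $C$ of $K$ and that $p\in C$. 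Finally, prove non-local-connectedness at $p$ by exhibiting a constant $\delta>0$ with $\diam D\ge\delta$ for every connected $D\subseteq C$ containing both $p$ and some $p_n$.

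The hard part will be this last structural requirement, and it is precisely where the no-rotation hypothesis bites. Because $\Phi$ has no rotations, the base $B$ reproduces at every scale as horizontal segments of the \emph{same} orientation; in a careless design these ``sub-bases'' would reconnect each $T_n$ to $T_\infty$ high up along paths of diameter tending to $0$, thereby restoring local connectedness --- indeed they would \emph{have to}, since by Hata's theorem \cite{Hat85} a connected self-similar set is automatically locally connected. The whole point of the construction is therefore to keep the teeth bare and to push every reconnecting sub-base into a \emph{different} component of $K$, disjoint from $C$, so that inside $C$ the only route between distinct teeth runs through the single base $B$. Making this rigorous --- certifying, via the neighbour/cylinder structure, that no unwanted gluing occurs within $C$ and that the diameter bound $\delta$ genuinely holds --- is the crux of the argument; everything else is bookkeeping.
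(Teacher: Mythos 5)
You correctly read the statement as something to be \emph{disproved} by an explicit counterexample, and your comb mechanism (a sequence of parallel teeth accumulating on a limit tooth, glued only along one base, so that local connectedness fails at the far tips) is exactly the mechanism of the paper; the hypotheses you call easy (homotheties of a common ratio, disjoint interiors, hence the OSC) are indeed the easy part. But your proposal never exhibits an IFS, and the step you defer as ``the crux'' is not deferrable bookkeeping: it is the entire content of the problem, and the specific geometry you commit to --- \emph{vertical} teeth arising as fibres of a product sub-attractor $\mathcal{E}\times[0,1]$ glued by a \emph{horizontal} base $B$ --- cannot be made to work at all. Here is the obstruction. Since $B$ is a horizontal segment meeting every tooth, $B\supset I\times\{h\}$ where $I$ is the convex hull of the teeth's abscissae; consequently \emph{every} fibre $\{c\}\times[0,1]$ with $c\in\mathcal{E}\cap I$ meets $B$ and is itself a tooth of the component $\mathcal{C}$. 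The product structure forces $\Phi$ to contain full columns $\psi(x,y)=(rx+a,\,ry+jr)$, and self-similarity applies every depth-$m$ composition $\Psi$ of such maps to \emph{all} of $K$, in particular to $B$. Choosing $\Psi$ whose cylinder lies inside $I$ and whose height window is $[1-r^m,1]$, the image $\Psi(B)$ is a horizontal segment of diameter $O(r^m)$ sitting just below the tips, and it meets the fibres over $\Psi(\mathcal{E}\cap I)$ --- at least two distinct teeth of $\mathcal{C}$. So $\Psi(B)$ lies in the \emph{same} component $\mathcal{C}$ (it touches teeth, so it cannot be ``pushed into a different component,'' which was your proposed fix), and it joins distinct teeth by connected subsets of $\mathcal{C}$ of diameter $O(r^m)$ arbitrarily close to the tips. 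The uniform lower bound $\diam D\ge\delta$ that your whole argument rests on is therefore provably false in this design. This is no accident: with both distinguished directions axis-parallel you are back in the grid-like regime where Luo, Rao and Xiong proved that every component \emph{is} locally connected \cite{LRX22}. (Your appeal to Hata's theorem here is also misplaced: it concerns a connected attractor, not a component of a disconnected one.)

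The idea your plan is missing --- and the actual novelty of the paper --- is to make the two directions \emph{transversal without rotating anything}, by tilting the teeth rather than the base. The paper's columns are staircases, each cell offset by $(\tfrac1{24},\tfrac16)$ from the one below, so the attractor contains genuine line segments of slope $4$ (the $\ell_x$, $x\in C_1$, and the $\ell^z$, $z\in C_2$), pairwise separated by honestly vacant open strips; the base is a \emph{vertical} edge, namely $\varphi_1(\{1\}\times[0,1])$, which lies in $K$ because the right-hand column of cells is full, and which coincides with the left edge of the cell $\varphi_7([0,1]^2)$ where the slanted segments $\varphi_7(\ell^z)$ attach transversally. In that geometry, scaled copies of the vertical base remain inside single staircases and never bridge two distinct slanted teeth, so the separation (and hence the diameter bound) survives at all scales, and non-local connectedness follows at interior points of each tooth $\varphi_7\varphi_4(\ell_x)$. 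Without this second, non-axis-parallel family of segments, no amount of care in your axis-parallel setup will produce the counterexample.
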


We are able to construct a homogeneous self-similar IFS on $\R^2$ satisfying the open set condition (even the convex open set condition, that is, the invariant open set is a convex set) without using any rotations or reflections, such that some particular connected component of the attractor is not locally connected. As a result, only very specific self-similar sets that are generated from a strict grid structure or totally disconnected can have the nice property that every connected component is locally connected.

%----------------------
\section{The counterexample}

Consider the planar self-similar IFS $\Phi=\{\varphi_i(\mathbf{x})=\frac{1}{6}\mathbf{x}+a_i\}_{i=1}^{24}$, where 
\begin{equation*}
    \begin{gathered}
        a_1=(0,0), a_2=(0,\tfrac{1}{6}), a_3=(0,\tfrac{1}{3}), a_4=(\tfrac{1}{24},\tfrac{1}{2}), a_5=(\tfrac{1}{12},\tfrac{2}{3}), a_6=(\tfrac{1}{8},\tfrac{5}{6}), \\
        a_7=(\tfrac{1}{6},0), a_8=(\tfrac{5}{24},\tfrac{1}{6}), a_9=(\tfrac{1}{4},\tfrac{1}{3}), a_{10}=(\tfrac{7}{24},\tfrac{1}{2}), a_{11}=(\tfrac{1}{3},\tfrac{2}{3}), a_{12}=(\tfrac{3}{8},\tfrac{5}{6}), \\
        a_{13}=(\tfrac{5}{12},0), a_{14}=(\tfrac{11}{24},\tfrac{1}{6}), a_{15}=(\tfrac{1}{2},\tfrac{1}{3}), a_{16}=(\tfrac{13}{24},\tfrac{1}{2}), a_{17}=(\tfrac{7}{12},\tfrac{2}{3}), a_{18}=(\tfrac{5}{8},\tfrac{5}{6}), \\
        a_{19}=(\tfrac{5}{6},0), a_{20}=(\tfrac{5}{6},\tfrac{1}{6}), a_{21}=(\tfrac{5}{6},\tfrac{1}{3}), a_{22}=(\tfrac{5}{6},\tfrac{1}{2}), a_{23}=(\tfrac{5}{6},\tfrac{2}{3}), a_{24}=(\tfrac{5}{6},\tfrac{5}{6}).
    \end{gathered}
\end{equation*}
Please see Figure~\ref{fig:exa} for an illustration. It is easy to check that $\bigcup_{i=1}^{24}\varphi_i([0,1]^2)\subset[0,1]^2$ and $\varphi_1([0,1]^2),\ldots,\varphi_{24}([0,1]^2)$ have disjoint interiors. In particular, $\Phi$ satisfies the (convex) open set condition. Denote by $K$ the attractor associated with $\Phi$.  Such a self-similar set is sometimes refered to as a \emph{Lalley-Gatzouras carpet} (\cite{LG92}).

\begin{figure}[htbp]
    \centering
    \begin{tikzpicture}[scale=5]
        \draw[thick,dashed] (0,0) rectangle (1,1);
        \draw[thick] (0,0) rectangle (1/6,1/6);
        \draw[thick] (0,1/6) rectangle (1/6,1/3);
        \draw[thick] (0,1/3) rectangle (1/6,1/2);
        \draw[thick] (1/24,1/2) rectangle (5/24,2/3);
        \draw[thick] (1/12,2/3) rectangle (1/4,5/6);
        \draw[thick] (1/8,5/6) rectangle (7/24,1);
        \draw[thick] (1/6,0) rectangle (1/3,1/6);
        \draw[thick] (5/24,1/6) rectangle (3/8,1/3);
        \draw[thick] (1/4,1/3) rectangle (5/12,1/2);
        \draw[thick] (7/24,1/2) rectangle (11/24,2/3);
        \draw[thick] (1/3,2/3) rectangle (1/2,5/6);
        \draw[thick] (3/8,5/6) rectangle (13/24,1);
        \draw[thick] (5/12,0) rectangle (7/12,1/6);
        \draw[thick] (11/24,1/6) rectangle (5/8,1/3);
        \draw[thick] (1/2,1/3) rectangle (2/3,1/2);
        \draw[thick] (13/24,1/2) rectangle (17/24,2/3);
        \draw[thick] (7/12,2/3) rectangle (3/4,5/6);
        \draw[thick] (5/8,5/6) rectangle (19/24,1);
        \draw[thick] (5/6,0) rectangle (1,1/6);
        \draw[thick] (5/6,1/6) rectangle (1,1/3);
        \draw[thick] (5/6,1/3) rectangle (1,1/2);
        \draw[thick] (5/6,1/2) rectangle (1,2/3);
        \draw[thick] (5/6,2/3) rectangle (1,5/6);
        \draw[thick] (5/6,5/6) rectangle (1,1);

        \node at(1/12,1/12) {$\varphi_1$};
        \node at(1/12,1/4) {$\varphi_2$};
        \node at(1/12,5/12) {$\varphi_3$};
        \node at(1/8,7/12) {$\varphi_4$};
        \node at(1/6,3/4) {$\varphi_5$};
        \node at(5/24,11/12) {$\varphi_6$};
        \node at(1/4,1/12) {$\varphi_7$};
        \node at(7/24,1/4) {$\varphi_8$};
        \node at(1/3,5/12) {$\varphi_9$};
        \node at(3/8,7/12) {$\varphi_{10}$};
        \node at(5/12,3/4) {$\varphi_{11}$};
        \node at(11/24,11/12) {$\varphi_{12}$};
        \node at(1/2,1/12) {$\varphi_{13}$};
        \node at(13/24,1/4) {$\varphi_{14}$};
        \node at(7/12,5/12) {$\varphi_{15}$};
        \node at(5/8,7/12) {$\varphi_{16}$};
        \node at(2/3,3/4) {$\varphi_{17}$};
        \node at(17/24,11/12) {$\varphi_{18}$};
        \node at(11/12,1/12) {$\varphi_{19}$};
        \node at(11/12,1/4) {$\varphi_{20}$};
        \node at(11/12,5/12) {$\varphi_{21}$};
        \node at(11/12,7/12) {$\varphi_{22}$};
        \node at(11/12,3/4) {$\varphi_{23}$};
        \node at(11/12,11/12) {$\varphi_{24}$};
        % \node at(-1.875,1.875) {$\varphi_1$};
        % \node at(-2.5,-1.25) {$\varphi_2$};
        % \node at(-1.25,-1.25) {$\varphi_3$};
        % \node at(-2.5,-2.5) {$\varphi_4$};
        % \node at(-1.25,-2.5) {$\varphi_5$};
        % \node at(1.25,-1.25) {$\varphi_6$};
        % \node at(2.5,-1.25) {$\varphi_7$};
        % \node at(1.25,-2.5) {$\varphi_8$};
        % \node at(2.5,-2.5) {$\varphi_9$};
    \end{tikzpicture}
    \hspace*{0.8cm}
    \includegraphics[width=5cm]{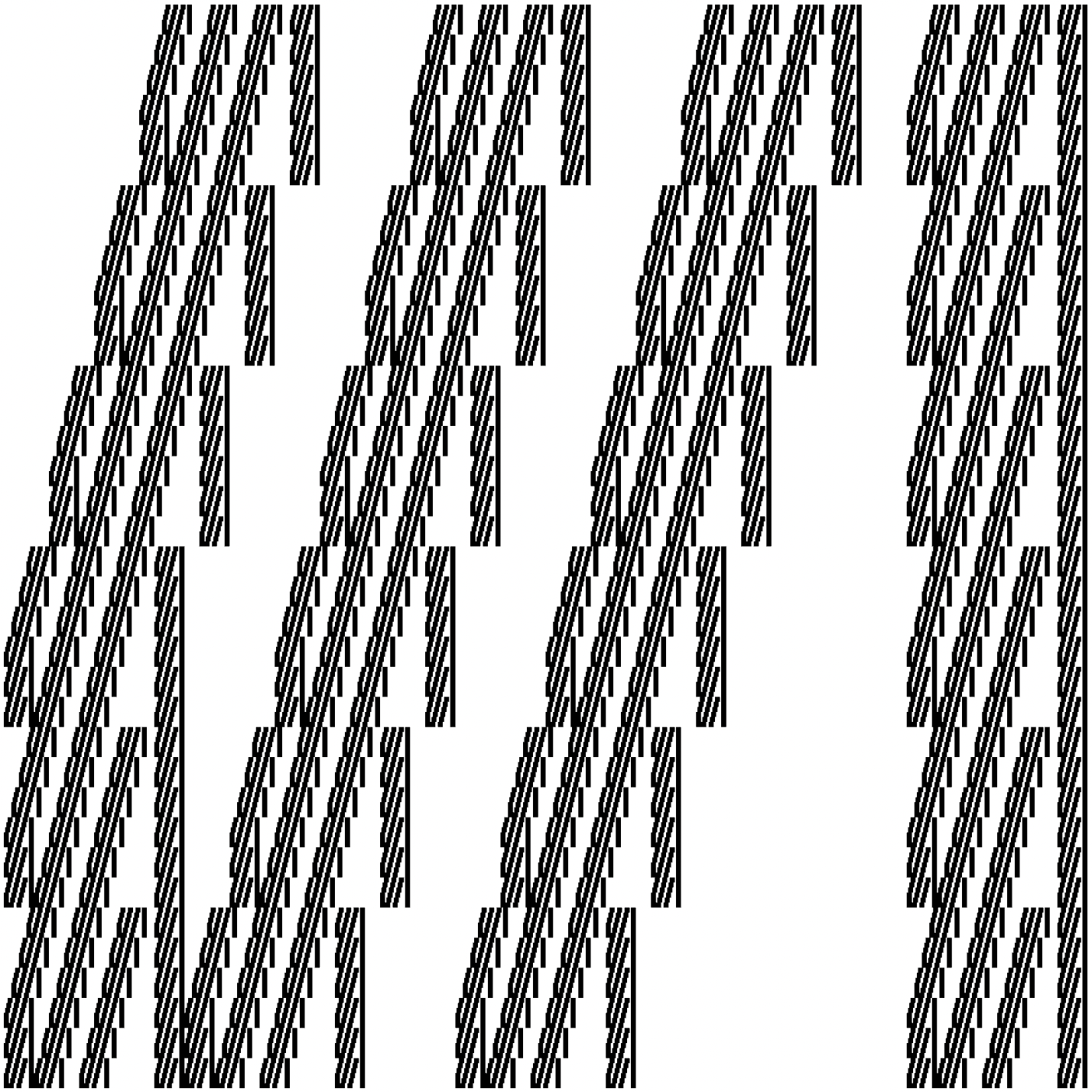}
    \caption{An illustration of the IFS $\Phi$ and the attractor $K$}
    \label{fig:exa}
\end{figure}

We will first prove that there are an infinite number of long, `well separated' line segments all hitting the left side of the unit square, which is the key ingredient.
%This is rather straightforward, but nevertheless we will give a rigorous proof as follows. Readers can just take a glance at notations and then turn directly to Lemma~\ref{lem:diffcomponents} to avoid tedious details.

Let $C_1$ be the Cantor set generated by the IFS $\{\frac{1}{6}x+\frac{1}{6},\frac{1}{6}x+\frac{5}{12}\}$. Recalling the translation vectors $a_7,a_{13}$, we have $C_1\times\{0\}\subset K$. For $x\in C_1$, write 
\[
    \ell_{x} = \{(x+\tfrac{y}{4},y): y\in[0,1]\}
\]
to be the line segment passing through $(x,0)$ and of slope $4$. Note that the other endpoint of $\ell_x$ is $(x+\frac{1}{4},1)$. 

\begin{lemma}
    $\bigcup_{x\in C_1} \ell_x \subset K$.
\end{lemma}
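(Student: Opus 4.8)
The plan is to exhibit the union $L:=\bigcup_{x\in C_1}\ell_x$ as the attractor of the sub-IFS $\Psi:=\{\varphi_7,\varphi_8,\dots,\varphi_{18}\}$, after which $L\subseteq K$ is automatic. Write $g_1(x)=\frac16 x+\frac16$ and $g_2(x)=\frac16 x+\frac5{12}$ for the two maps generating $C_1$, so that $C_1=g_1(C_1)\cup g_2(C_1)$. Everything hinges on the two exact decompositions
\[
  \ell_{g_1(t)}=\bigcup_{k=0}^{5}\varphi_{7+k}(\ell_t)
  \qquad\text{and}\qquad
  \ell_{g_2(t)}=\bigcup_{k=0}^{5}\varphi_{13+k}(\ell_t),
  \qquad t\in C_1 .
\]
Granting these, and using $C_1=g_1(C_1)\cup g_2(C_1)$, I would compute
\[
  L=\bigcup_{t\in C_1}\ell_{g_1(t)}\;\cup\;\bigcup_{t\in C_1}\ell_{g_2(t)}
   =\bigcup_{k=0}^{5}\varphi_{7+k}(L)\;\cup\;\bigcup_{k=0}^{5}\varphi_{13+k}(L)
   =\bigcup_{j=7}^{18}\varphi_j(L),
\]
so that $L$ is invariant under $\Psi$. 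Since $L$ is the image of the compact set $C_1\times[0,1]$ under the continuous map $(x,y)\mapsto(x+\frac y4,y)$, it is nonempty and compact, whence by the uniqueness of the attractor it coincides with the attractor of $\Psi$. Finally, every coding sequence over $\{7,\dots,18\}$ is also one over $\{1,\dots,24\}$, so the attractor of $\Psi$ is contained in $K$; therefore $L\subseteq K$, as required.

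It remains to establish the two decompositions, which is the only place where computation is needed. Each $\varphi_i$ scales by $\frac16$ and translates, hence maps a segment of slope $4$ to a segment of slope $4$; since both sides above consist of slope-$4$ segments, it suffices to match endpoints. Reading off $a_{7+k}=(\frac16+\frac k{24},\frac k6)$ for $k=0,\dots,5$ from the list of translation vectors, a direct calculation gives
\[
  \varphi_{7+k}(t,0)=\Big(g_1(t)+\tfrac k{24},\,\tfrac k6\Big),
  \qquad
  \varphi_{7+k}\big(t+\tfrac14,1\big)=\Big(g_1(t)+\tfrac{k+1}{24},\,\tfrac{k+1}{6}\Big).
\]
These are precisely the points at which $\ell_{g_1(t)}$ crosses the horizontal lines $y=\frac k6$ and $y=\frac{k+1}{6}$, so $\varphi_{7+k}(\ell_t)$ is exactly the portion of $\ell_{g_1(t)}$ lying in the $k$-th row; as $k$ runs from $0$ to $5$ these portions share consecutive endpoints and reconstitute all of $\ell_{g_1(t)}$. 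The identity for $g_2$ follows identically from $a_{13+k}=(\frac5{12}+\frac k{24},\frac k6)$.

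The substance of the argument, and the main obstacle, is thus the geometric bookkeeping that produces these decompositions: one must first discover that for $x\in g_1(C_1)$ the segment $\ell_x$ threads the staircase of cells $\varphi_7,\dots,\varphi_{12}$ (and for $x\in g_2(C_1)$ the staircase $\varphi_{13},\dots,\varphi_{18}$), and then check that the translation vectors are arranged so that the six images $\varphi_{7+k}(\ell_t)$ abut end to end along a single slope-$4$ line with neither gaps nor overlaps. Once the coordinates are pinned down, the self-similar identity for $L$, and hence the lemma, follow at once.
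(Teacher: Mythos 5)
Your proof is correct, but it takes a genuinely different route from the paper's. The paper argues pointwise: writing $x=\sum_{t}x_t6^{-t}$ with $x_t\in\{1,\frac52\}$ and $y=\sum_t y_t6^{-t}$ with $y_t\in\{0,\dots,5\}$, it computes $\varphi_{6\lfloor x_t\rfloor+y_t+1}(0,0)=\bigl(\frac{x_t}{6}+\frac{y_t}{24},\frac{y_t}{6}\bigr)$ and sums the resulting series to exhibit $(x+\frac{y}{4},y)$ as the limit of the explicit compositions $\varphi_{6\lfloor x_1\rfloor+y_1+1}\circ\cdots\circ\varphi_{6\lfloor x_n\rfloor+y_n+1}(0,0)$, hence a point of $K$; no set-level machinery is used. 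You instead work at the level of sets: you establish the exact decompositions $\ell_{g_1(t)}=\bigcup_{k=0}^{5}\varphi_{7+k}(\ell_t)$ and $\ell_{g_2(t)}=\bigcup_{k=0}^{5}\varphi_{13+k}(\ell_t)$, deduce the invariance $L=\bigcup_{j=7}^{18}\varphi_j(L)$, check that $L$ is nonempty and compact, and then invoke uniqueness of the Hutchinson attractor plus the standard fact that the attractor of a sub-IFS is contained in $K$. The two arguments rest on the same arithmetic of the translation vectors $a_7,\dots,a_{18}$ --- your index $7+k$ (resp.\ $13+k$) is exactly the paper's $6\lfloor x_t\rfloor+y_t+1$ with $\lfloor x_t\rfloor=1$ (resp.\ $2$) --- but they package it differently. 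Your version yields a stronger structural conclusion, namely that $\bigcup_{x\in C_1}\ell_x$ is itself a self-similar set, the attractor of $\{\varphi_7,\dots,\varphi_{18}\}$, at the cost of appealing to two standard theorems (uniqueness of attractors and sub-IFS containment); the paper's bare-hands limit computation is more elementary and self-contained, proving only the inclusion that is actually needed later.
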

\begin{proof}
    Fix any $x\in C_1$ and $y\in [0,1]$. Write $x=\sum_{t=1}^\infty x_t6^{-t}$ where $x_t\in\{1,\frac{5}{2}\}$, and write $y=\sum_{t=1}^\infty y_t6^{-t}$ where $y_t\in\{0,1,\ldots,5\}$. A direct calculation gives us 
    \[
        \varphi_{6\lfloor x_t\rfloor+y_t+1}(0,0)=\Big( \frac{x_t}{6}+\frac{y_t}{24}, \frac{y_t}{6} \Big), \quad t\geq 1.
    \]
    Therefore, 
    \begin{align*}
        \lim_{n\to\infty} \varphi_{6\lfloor x_1\rfloor+y_1+1}\circ\cdots\circ\varphi_{6\lfloor x_n\rfloor+y_n+1}(0,0) &= \lim_{n\to\infty} \sum_{t=1}^n 6^{-t+1}\cdot\Big( \frac{x_t}{6}+\frac{y_t}{24}, \frac{y_t}{6} \Big) \\
        &= \Big( \sum_{t=1}^\infty x_t6^{-t}+\frac{1}{4}\sum_{t=1}^\infty y_t6^{-t}, \sum_{t=1}^\infty y_t6^{-t} \Big) \\
        &= (x+\frac{y}{4},y).
    \end{align*}
    That is to say, $(x+\frac{y}{4},y)\in K$. Since $x,y$ are arbitrary, $\bigcup_{x\in C_1}\ell_x\subset K$.
\end{proof}

From the construction and the illustration in Figure~\ref{fig:exa}, it is not hard to see that there is a vacant strip (avoiding $K$) between each pair of different segments in $\{\ell_x:x\in C_1\}$. We will omit the tedious computation here. In particular, each pair of these segments belong to different connected components of $K$.

% \begin{lemma}\label{lem:wellsep}
%     The line segments $\ell_x,x\in C_1$ belong to different connected components of $K$.
% \end{lemma}
% \begin{proof}
%     Just note that there is a vacant strip (avoiding $K$) between each pair of them. We will skip the computation and please see Figure~\ref{fig:exa}.
% \end{proof}

Next, let us look at the left side of $[0,1]^2$. Let $C_2$ be the Cantor set generated by the IFS $\{\frac{1}{6}x,\frac{1}{6}x+\frac{1}{6}\}$. Recalling the translation vectors $a_1,a_2$, we have $\{0\}\times C_2\subset K$. For $z=\sum_{t=1}^\infty z_t6^{-t}\in C_2$ (where $z_t\in\{0,1\}$), it is convenient to use the following notations.
\begin{itemize}
    \item $x(z)=\sum_{t=1}^\infty (-\frac{3}{2}z_t+\frac{5}{2})6^{-t}$. Since $-\frac{3}{2}z_t+\frac{5}{2}=\frac{5}{2}$ when $z_t=0$ and $-\frac{3}{2}z_t+\frac{5}{2}=1$ when $z_t=1$, the map $z\mapsto x(z)$ is a bijection between $C_2$ and $C_1$. Roughly speaking, $x(z)$ can be thought of as the element in $C_1$ that is, in a certain sense, the `inverse' of $z$.
    \item $E_z=\bigcup_{i=3}^6\varphi_i(\ell_{x(z)})$.
    \item $\sigma(z)=\sum_{t=1}^\infty z_{t+1}6^{-t}$ (so $\sigma$ behaves like the usual left shift on symbolic spaces).
    \item $L_0(z)=\varphi_2(\ell_{\frac{1}{6}x(\sigma(z))+\frac{1}{6}})\cup\varphi_1(E_{\sigma(z)})$. Recall that $\frac{1}{6}x+\frac{1}{6}$ is a map in the IFS of $C_1$ and hence $\ell_{\frac{1}{6}x(\sigma(z))+\frac{1}{6}}$ is well defined.
    \item $L_1(z)=\varphi_2(E_{\sigma(z)})$.
    \item $\ell^z$: the line passing through $(0,z)$ and of slope $4$.
    \item For $n\geq 1$, $\varphi_{z_1\cdots z_n}:=\varphi_{z_1}\circ\cdots\circ\varphi_{z_n}$.
\end{itemize}
The last two notations are not needed in the next lemma.

\begin{lemma}\label{lem:ezink}
    For every $z\in C_2$, all of $E_z$, $L_0(z)$ and $L_1(z)$ are line segments in $K$. 
\end{lemma}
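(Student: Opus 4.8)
The plan is to treat each of $E_z$, $L_0(z)$, $L_1(z)$ as a finite union of homothetic images of the slope-$4$ segments $\ell_x$ ($x\in C_1$) produced by the previous lemma, and in each case to verify that (i) the union sits inside $K$, (ii) every piece is again a slope-$4$ segment, (iii) all pieces in a given union are collinear, and (iv) their vertical extents are adjacent intervals whose union is an interval. Since a union of segments lying on one common non-vertical line whose vertical projections form adjacent intervals with union an interval is itself a single segment, (ii)--(iv) yield that each set is one line segment. Containment in $K$ is the easy half: $\ell_x\subset K$ for $x\in C_1$ by the previous lemma, and $K$ is $\Phi$-invariant, so $\varphi_i(\ell_x)\subset\varphi_i(K)\subset K$; applying this to each defining piece of $E_z$, $L_0(z)$, $L_1(z)$ places all three inside $K$.

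To carry out (ii)--(iv) uniformly I would first record how a single homothety $\varphi_i(u,v)=(\tfrac16 u+a_i^{(1)},\,\tfrac16 v+a_i^{(2)})$ acts on a slope-$4$ segment. Writing such a segment as $\{(\tfrac{v}{4}+b,\,v):v\in I\}$ for an \emph{offset} $b$ and a vertical range $I$, a direct substitution shows $\varphi_i$ sends it to the slope-$4$ segment with offset $\beta_i(b):=\tfrac{b}{6}+a_i^{(1)}-\tfrac14 a_i^{(2)}$ and vertical range $\tfrac16 I+a_i^{(2)}$. Collinearity of several images then reduces to comparing the numbers $\beta_i(b)$, and adjacency reduces to comparing the transformed intervals.

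For $E_z$ the segment $\ell_{x(z)}$ has offset $x(z)$ and range $[0,1]$, so $\varphi_i(\ell_{x(z)})$ has offset $\tfrac{x(z)}{6}+\bigl(a_i^{(1)}-\tfrac14 a_i^{(2)}\bigr)$ and range $[a_i^{(2)},\,a_i^{(2)}+\tfrac16]$. The crucial coincidence is that $a_i^{(1)}-\tfrac14 a_i^{(2)}=-\tfrac1{12}$ for every $i\in\{3,4,5,6\}$, so the four pieces share the offset $\tfrac{x(z)}{6}-\tfrac1{12}$, while their ranges $[\tfrac13,\tfrac12],[\tfrac12,\tfrac23],[\tfrac23,\tfrac56],[\tfrac56,1]$ abut and cover $[\tfrac13,1]$; hence $E_z$ is the single segment of offset $\tfrac{x(z)}{6}-\tfrac1{12}$ over $[\tfrac13,1]$. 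The set $L_1(z)=\varphi_2(E_{\sigma(z)})$ is then immediate, being the homothetic image of one segment. For $L_0(z)$ I would compute the two offsets separately: $\varphi_2(\ell_p)$ with $p=\tfrac16 x(\sigma(z))+\tfrac16$ has offset $\tfrac{x(\sigma(z))}{36}-\tfrac1{72}$ over $[\tfrac16,\tfrac13]$, and $\varphi_1(E_{\sigma(z)})$ has the same offset $\tfrac{x(\sigma(z))}{36}-\tfrac1{72}$ over $[\tfrac1{18},\tfrac16]$; the two ranges abut at $v=\tfrac16$, so $L_0(z)$ is the single segment over $[\tfrac1{18},\tfrac13]$.

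The only genuine obstacle is verifying the two collinearity coincidences, since slope preservation, containment and adjacency of intervals are all formal. Both coincidences trace back to the specific translation vectors: the constancy of $a_i^{(1)}-\tfrac14 a_i^{(2)}$ across the ``sheared column'' $i\in\{3,\dots,6\}$ is exactly the Lalley--Gatzouras shear that keeps $E_z$ straight, and the identity $\beta_2(p)=\beta_1\!\bigl(\tfrac{x(\sigma(z))}{6}-\tfrac1{12}\bigr)$ forced by the choice $p=\tfrac16 x(\sigma(z))+\tfrac16$ is what glues the $\varphi_2$- and $\varphi_1$-blocks of $L_0(z)$ into one segment. I would present each of these as a short explicit computation rather than invoke any general principle.
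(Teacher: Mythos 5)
Your proposal is correct: every numerical claim checks out (in particular $a_i^{(1)}-\tfrac14 a_i^{(2)}=-\tfrac{1}{12}$ for all $i\in\{3,4,5,6\}$, and both blocks of $L_0(z)$ do have the common offset $\tfrac{x(\sigma(z))}{36}-\tfrac{1}{72}$), and the segments you obtain agree with those recorded in the paper, e.g.\ your $E_z$ runs from $(\tfrac{x(z)}{6},\tfrac13)$ to $(\tfrac{x(z)}{6}+\tfrac16,1)$, matching~\eqref{eq:ezendpts}. The overall route is the same as the paper's --- decompose each set into homothetic images of the slope-$4$ segments from the first lemma, get containment in $K$ from the invariance of $K$, and verify by direct computation that the pieces glue into a single segment --- but your gluing certificate is organized differently: the paper checks, pair by pair, that consecutive pieces share a common endpoint and then invokes the ``same slope'' fact, whereas you check that all pieces lie on one common line (equal offsets under the map $\beta_i(b)=\tfrac{b}{6}+a_i^{(1)}-\tfrac14 a_i^{(2)}$) and that their vertical projections are abutting intervals whose union is an interval. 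The arithmetic content is identical, but your formulation has the merit of treating the four pieces of $E_z$ in one stroke via the single identity $a_i^{(1)}-\tfrac14 a_i^{(2)}=-\tfrac{1}{12}$, which isolates the structural reason (the constant shear across the column $i\in\{3,\dots,6\}$) why $E_z$ is straight; the same offset bookkeeping would also streamline the endpoint-matching computations that the paper repeats in Facts 1--3 of the proof of Lemma~\ref{lem:linesegments}. Conversely, the paper's endpoint computation directly produces the explicit endpoint formulas~\eqref{eq:ezendpts} and~\eqref{eq:lzendpts} needed later, which your version would still have to extract (easily) from the offsets and vertical ranges.
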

Recall that the two endpoints of $\ell_x$ are $(x,0)$ and $(x+\frac{1}{4},1)$, respectively.
\begin{proof}
    By the previous lemma, $E_z, L_0(z), L_1(z)$ are of course subsets of $K$. So it suffices to show that they are all segments. For $3\leq i\leq 5$, $\varphi_i(\ell_{x(z)})$ has an endpoint 
    \[
        \varphi_i\Big( x(z)+\frac{1}{4},1 \Big) = \Big( \frac{x(z)}{6}+\frac{1}{24},\frac{1}{6} \Big)+\Big( \frac{i-3}{24},\frac{i-1}{6} \Big) = \Big( \frac{x(z)}{6}+\frac{i}{24}-\frac{1}{12},\frac{i}{6} \Big)
    \] 
    while $\varphi_{i+1}(\ell_{x(z)})$ has an endpoint
    \[
        \varphi_{i+1}(x(z),0) = \Big( \frac{x(z)}{6},0 \Big)+\Big( \frac{(i+1)-3}{24},\frac{(i+1)-1}{6} \Big) = \Big( \frac{x(z)}{6}+\frac{i}{24}-\frac{1}{12},\frac{i}{6} \Big).
    \] 
    So they share a common point. Due to the `same slope' fact, $E_z=\bigcup_{i=3}^6\varphi_i(\ell_{x(z)})$ is a line segment and hence is $L_{1}(z)$. Similarly, $\varphi_2(\ell_{\frac{1}{6}x(\sigma(z))+\frac{1}{6}})$ has an endpoint 
    \[
        \varphi_2\Big( \frac{x(\sigma(z))}{6}+\frac{1}{6},0 \Big) = \Big( \frac{x(\sigma(z))}{36}+\frac{1}{36},\frac{1}{6} \Big)
    \]
    while $\varphi_1(E_{\sigma(z)})$ has an endpoint 
    \[
        \varphi_1\varphi_6\Big( x(\sigma(z))+\frac{1}{4},1 \Big) = \varphi_1\Big( \frac{x(\sigma(z))}{6}+\frac{1}{6},1 \Big) = \Big( \frac{x(\sigma(z))}{36}+\frac{1}{36},\frac{1}{6} \Big),
    \]
    that is, they share a common point. As a result, $L_0(z)$ is a line segment.
\end{proof}

Moreover, we can conclude from the above lemma that:
\begin{itemize}
    \item $E_z$ has endpoints 
    \begin{equation}\label{eq:ezendpts}
        \varphi_6\Big( x(z)+\frac{1}{4},1 \Big)=\Big( \frac{x(z)}{6}+\frac{1}{6},1 \Big) \text{ and } \varphi_3(x(z),0)= \Big( \frac{x(z)}{6},\frac{1}{3} \Big). 
    \end{equation}
    \item $L_0(z)$ has endpoints $(\frac{x(\sigma(z))}{36}+\frac{5}{72},\frac{1}{3})$ and $(\frac{x(\sigma(z))}{36},\frac{1}{18})$;
    \item $L_1(z)$ has endpoints $(\frac{x(\sigma(z))}{36}+\frac{1}{36},\frac{1}{3})$ and $(\frac{x(\sigma(z))}{36},\frac{2}{9})$;
    \item The above two indicate that for $i\in\{0,1\}$, $L_i(z)$ has endpoints 
    \begin{equation}\label{eq:lzendpts}
        \Big( \frac{x(\sigma(z))}{36}-\frac{i}{24}+\frac{5}{72},\frac{1}{3} \Big) \text{ and } \Big( \frac{x(\sigma(z))}{36},\frac{i}{6}+\frac{1}{18} \Big). 
    \end{equation}
\end{itemize}
Another useful fact is 
\begin{equation}\label{eq:xzsigmaz}
    x(z)-\frac{x(\sigma(z))}{6} = \frac{1}{6}\cdot\Big( -\frac{3}{2}z_1+\frac{5}{2} \Big).
\end{equation}
The lemma below shows that $K$ contains infinitely many line segments hitting the left side of $[0,1]^2$.

\begin{lemma}\label{lem:linesegments}
    $\bigcup_{z\in C_2} (\ell^z\cap[0,1]^2)\subset K$.
\end{lemma}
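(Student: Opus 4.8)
The plan is to fix $z\in C_2$ and show that the entire segment $S_z:=\ell^z\cap[0,1]^2$ lies in $K$; the lemma then follows by taking the union over $z$. Since $C_2\subset[0,\tfrac15]$, the line $\ell^z$ meets the unit square precisely in the segment from $(0,z)$ on the left edge to $((1-z)/4,1)$ on the top edge, and by slope $4$ its point at height $y\in[z,1]$ is $((y-z)/4,y)$. First I would peel off the top two-thirds. From the definition of $x(z)$ one gets the identity $2x(z)+3z=1$, hence $x(z)/6=(\tfrac13-z)/4$; comparing this with the endpoints \eqref{eq:ezendpts} of $E_z$ shows that both endpoints of $E_z$ sit on $\ell^z$, at heights $\tfrac13$ and $1$. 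Since $E_z$ is a slope-$4$ segment by Lemma~\ref{lem:ezink}, it must coincide with the part of $S_z$ over $[\tfrac13,1]$. This reduces the problem to the bottom piece $B_z$, the sub-segment of $S_z$ over heights $[z,\tfrac13]$.

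Next I would establish a self-referential decomposition of $B_z$ according to the first digit $z_1$. When $z_1=0$ we have $z=\sigma(z)/6$, so $(0,z)$ lies in the cell $\varphi_1([0,1]^2)$, and I claim $B_z=\varphi_1(B_{\sigma z})\cup L_0(z)$, the first piece covering heights $[z,\tfrac1{18}]$ and $L_0(z)$ covering $[\tfrac1{18},\tfrac13]$. When $z_1=1$ we have $z=\tfrac16+\sigma(z)/6$, the point $(0,z)$ lies in $\varphi_2([0,1]^2)$, and $B_z=\varphi_2(B_{\sigma z})\cup L_1(z)$, the pieces meeting at height $\tfrac29$. Both claims reduce to checking that consecutive endpoints agree: the top of $L_{z_1}(z)$ equals the top $(x(z)/6,\tfrac13)$ of $B_z$ by \eqref{eq:xzsigmaz}, the internal junction matches directly from \eqref{eq:ezendpts} and \eqref{eq:lzendpts}, and the lower end returns to $(0,z)$ via $\varphi_{1+z_1}(0,\sigma z)=(0,z)$. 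In both cases this gives the uniform recursion $B_z=\varphi_{1+z_1}(B_{\sigma z})\cup L_{z_1}(z)$.

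I would then unfold this recursion. Iterating $n$ times yields
\begin{equation*}
    B_z=\Bigl(\bigcup_{k=0}^{n-1}\varphi_{1+z_1}\circ\cdots\circ\varphi_{1+z_k}\bigl(L_{z_{k+1}}(\sigma^k z)\bigr)\Bigr)\cup\varphi_{1+z_1}\circ\cdots\circ\varphi_{1+z_n}\bigl(B_{\sigma^n z}\bigr),
\end{equation*}
the empty prefix ($k=0$) being the identity. Each $L_{z_{k+1}}(\sigma^k z)$ lies in $K$ by Lemma~\ref{lem:ezink} and $\varphi_i(K)\subset K$, so every term of the union is contained in $K$. The remainder $\varphi_{1+z_1}\circ\cdots\circ\varphi_{1+z_n}(B_{\sigma^n z})$ sits in a cylinder of diameter $\le\sqrt2\,6^{-n}$ whose lowest point is always $(0,z)$ (again because $\varphi_{1+z_n}(0,\sigma^n z)=(0,\sigma^{n-1}z)$), so it collapses to the single point $(0,z)$. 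Hence $B_z$ is the closure of $\bigcup_{k\ge0}\varphi_{1+z_1}\circ\cdots\circ\varphi_{1+z_k}(L_{z_{k+1}}(\sigma^k z))$, and since $K$ is closed we conclude $B_z\subset K$, whence $S_z=B_z\cup E_z\subset K$.

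I expect the main obstacle to be that the decomposition is genuinely infinite: no finite union of the explicit segments $E_w,L_0(w),L_1(w)$ exhausts $B_z$, because the base point $(0,z)$ is attained only in the limit. The heart of the proof is therefore the recursion of the second paragraph together with the limiting step, which leans on the closedness of $K$ and on the leftover cylinders shrinking to $(0,z)$; by contrast, verifying that the pieces glue correctly at each level is routine once the endpoint data \eqref{eq:ezendpts}--\eqref{eq:xzsigmaz} are in hand.
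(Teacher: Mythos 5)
Your proof is correct and takes essentially the same route as the paper: it uses the identical decomposition of $\ell^z\cap[0,1]^2$ into $E_z$, the pieces $\varphi_{1+z_1}\circ\cdots\circ\varphi_{1+z_k}\bigl(L_{z_{k+1}}(\sigma^k z)\bigr)$, and the limit point $(0,z)$, justified by the same endpoint computations from \eqref{eq:ezendpts}, \eqref{eq:lzendpts} and \eqref{eq:xzsigmaz}. The only difference is organizational: you obtain the decomposition by iterating the one-step recursion $B_z=\varphi_{1+z_1}(B_{\sigma z})\cup L_{z_1}(z)$ and finish with the closedness of $K$, whereas the paper writes down the infinite chain directly and verifies that consecutive segments share endpoints and that their endpoints converge to $(0,z)$ (its Facts 1--3).
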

\begin{proof}
    Fix any $z=\sum_{t=1}^\infty z_t6^{-t} \in C_2$, where $z_t\in\{0,1\}$. We claim that 
    % \[
    %     \ell^z\cap[0,1]^2 = \{(0,z)\}\cup E_{z} \cup T_{z_1}(\sigma(z)) \cup \bigcup_{n=2}^\infty \varphi_{z_{1}\cdots z_{n-1}}(T_{z_n}(\sigma^{n-1}(z))).
    % \]
    \[
        \ell^z\cap[0,1]^2 = \{(0,z)\}\cup E_{z} \cup \bigcup_{n=1}^\infty \varphi_{z_{1}\cdots z_{n-1}}(L_{z_n}(\sigma^{n-1}(z))),
    \]
    where $\varphi_{z_0}$ denotes the identity map. Combining this with Lemma~\ref{lem:ezink} finishes the proof.
    Since all of the above segments have the same slope, it suffices to show the following three facts.

    Fact 1. $E_z$ and $L_{z_1}(\sigma(z))$ share a common endpoint. In fact, just recall~\eqref{eq:ezendpts}, \eqref{eq:lzendpts} and note that by~\eqref{eq:xzsigmaz}, 
    \begin{align*}
        \frac{x(z)}{6} - \Big( \frac{x(\sigma(z))}{36}-\frac{z_1}{24}+\frac{5}{72} \Big) &= \frac{1}{6}\Big( x(z)-\frac{x(\sigma(z))}{6} \Big) +\frac{z_1}{24}-\frac{5}{72} \\
        &= \frac{1}{36}\cdot\Big( -\frac{3}{2}z_1+\frac{5}{2} \Big)+\frac{z_1}{24}-\frac{5}{72} \\
        &= 0.
    \end{align*}

    % Step 2. $T_{z_1}(\sigma(z))$ and $\varphi_{z_1}(T_{z_2}(\sigma(z)))$ share a common endpoint. 
    % %If $z_1=0$, 
    % In fact, note that $T_{z_1}(\sigma(z))$ has an endpoint $(\frac{1}{36}x(\sigma(z)),\frac{1}{6}z_1+\frac{1}{18})$, while $\varphi_{z_1}(T_{z_2}(\sigma(z)))$ has an endpoint 
    % \[
    %     \varphi_{z_1}\Big( \frac{1}{36}x(\sigma^2(z))-\frac{1}{24}z_2+\frac{5}{72},\frac{1}{3} \Big) = \Big( \frac{1}{6^3}x(\sigma^2(z))-\frac{1}{6\cdot 24}z_2+\frac{5}{6\cdot 72}, \frac{1}{18}+\frac{1}{6}z_1 \Big).
    % \]
    % Again by~\eqref{eq:xzsigmaz},
    % \begin{align*}
    %     \frac{1}{36}x(\sigma(z))&-\Big( \frac{1}{6^3}x(\sigma^2(z))-\frac{1}{6\cdot 24}z_2+\frac{5}{6\cdot 72} \Big) \\
    %     &=\frac{1}{36}\Big( x(\sigma(z))-\frac{1}{6}x(\sigma^2(z)) \Big) + \frac{1}{6\cdot 24}z_2-\frac{5}{6\cdot 72} \\
    %     &= \frac{1}{6^3}\Big( -\frac{3}{2}z_2+\frac{5}{2} \Big)+ \frac{1}{6\cdot 24}z_2-\frac{5}{6\cdot 72} \\
    %     &=0,
    % \end{align*}
    % that is to say, they share a common endpoint. 

    Fact 2. For all $n\geq 1$, $\varphi_{z_{1}\cdots z_{n-1}}(L_{z_n}(\sigma^{n-1}(z)))$ and $\varphi_{z_{1}\cdots z_{n}}(L_{z_{n+1}}(\sigma^{n}(z)))$ share a common endpoint. Again recall~\eqref{eq:ezendpts}, \eqref{eq:lzendpts}. Then note that the former segment has an endpoint 
    \begin{equation}\label{eq:tendstoz}
        \begin{aligned}
            \varphi_{z_{1}\cdots z_{n-1}}\Big( \frac{x(\sigma^n(z))}{36}, \frac{z_n}{6}+\frac{1}{18} \Big) = \Big( \frac{x(\sigma^n(z))}{6^{n+1}},\frac{z_n}{6^n}+\frac{1}{3\cdot 6^n} \Big)+\varphi_{z_1\cdots z_{n-1}}(0,0),
        \end{aligned}
    \end{equation}
    % \begin{equation}
    %     \varphi_{z_{1}\cdots z_{n-1}}\Big( \frac{1}{36}x(\sigma^n(z)), \frac{1}{6}z_n+\frac{1}{18} \Big) = \Big( \frac{1}{6^{n+1}}x(\sigma^n(z)),\frac{1}{6^n}z_n+\frac{1}{3\cdot 6^n} \Big)+\varphi_{z_1\cdots z_{n-1}}(0,0),
    % \end{equation}
    while $\varphi_{z_{1}\cdots z_{n}}(T_{z_{n+1}}(\sigma^{n}(z)))$ has an endpoint 
    \begin{align*}
        \varphi_{z_{1}\cdots z_{n}}&\Big( \frac{x(\sigma^{n+1}(z))}{36}-\frac{z_{n+1}}{24}+\frac{5}{72},\frac{1}{3} \Big) \\
        &= \Big( \frac{x(\sigma^{n+1}(z))}{6^{n+2}}-\frac{z_{n+1}}{4\cdot 6^{n+1}}+\frac{5}{2\cdot 6^{n+2}},\frac{1}{3\cdot 6^{n}} \Big)+\varphi_{z_1\cdots z_n}(0,0).
    \end{align*}
    Since 
    \[
        \varphi_{z_1\cdots z_n}(0,0) - \varphi_{z_1\cdots z_{n-1}}(0,0) = \varphi_{z_1\cdots z_{n-1}}\Big(0,\frac{z_n}{6} \Big)- \varphi_{z_1\cdots z_{n-1}}(0,0) = \Big(0,\frac{z_n}{6^n} \Big),
    \]
    those two endpoints have the same $y$-coordinate. Moreover, by~\eqref{eq:xzsigmaz}, the difference between their $x$-coordinates is
    \begin{align*}
        \frac{x(\sigma^n(z))}{6^{n+1}} &- \Big( \frac{x(\sigma^{n+1}(z))}{6^{n+2}}-\frac{z_{n+1}}{4\cdot 6^{n+1}}+\frac{5}{2\cdot 6^{n+2}} \Big) \\
        &= \frac{1}{6^{n+1}}\cdot\Big( x(\sigma^n(z))-\frac{x(\sigma^{n+1}(z))}{6} \Big) + \frac{z_{n+1}}{4\cdot 6^{n+1}}-\frac{5}{2\cdot 6^{n+2}} \\
        &= \frac{1}{6^{n+2}}\cdot\Big( -\frac{3}{2}z_{n+1}+\frac{5}{2} \Big)+ \frac{z_{n+1}}{4\cdot 6^{n+1}}-\frac{5}{2\cdot 6^{n+2}} \\
        &= 0,
    \end{align*}
    that is, they also have the same $x$-coordinate. 

    Fact 3. One of the endpoints of $\varphi_{z_{1}\cdots z_{n-1}}(T_{z_n}(\sigma^{n-1}(z)))$ tends to $(0,z)$ as $n\to\infty$. Since $\lim_{n\to\infty}\varphi_{z_1\cdots z_n}(0,0)=(0,z)$, this follows immediately from~\eqref{eq:tendstoz}.
\end{proof}

\begin{lemma}\label{lem:diffcomponents}
    There exists some connected component of $K$ that is not locally connected.
\end{lemma}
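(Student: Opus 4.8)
The plan is to exhibit a single connected component of $K$ that looks like a vertical segment with a Cantor set of parallel ``teeth'' attached along it, and then to show that this comb fails to be locally connected at an interior point of one tooth, in the same way the topologist's sine curve does. The point of passing through a cell is that, although the segments $\ell^z$ meet the left edge of $[0,1]^2$ only in the Cantor set $\{0\}\times C_2$, a suitable copy of them can be made to meet an honest line segment of $K$.

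First I would record that $K$ contains the whole right edge $R:=\{1\}\times[0,1]$: the maps $\varphi_{19},\dots,\varphi_{24}$ send $R$ onto the six subintervals $\{1\}\times[\tfrac{k}{6},\tfrac{k+1}{6}]$ ($0\le k\le 5$), so $K\cap\{x=1\}$ is the attractor of six ratio-$\tfrac16$ maps tiling $[0,1]$, namely $R$ itself. Hence $V:=\varphi_1(R)=\{\tfrac16\}\times[0,\tfrac16]\subset K$ is a genuine vertical segment. By Lemma~\ref{lem:linesegments} each $\varphi_7(\ell^z)$ with $z\in C_2$ is a slope-$4$ segment contained in $\varphi_7(K)\subset K$, and its lower endpoint is $\varphi_7(0,z)=(\tfrac16,\tfrac z6)\in V$ since $\tfrac z6\le\tfrac1{30}<\tfrac16$. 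As each such tooth meets the connected set $V$, the union $V\cup\bigcup_{z\in C_2}\varphi_7(\ell^z)$ is connected and therefore lies in one connected component $\mathcal C$ of $K$.

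To see that $\mathcal C$ is not locally connected, observe that $0$ is an accumulation point of $C_2$ (for instance $6^{-n}\in C_2$), so the teeth $\varphi_7(\ell^{6^{-n}})$ converge uniformly to $\varphi_7(\ell^0)$. I would fix a point $p$ in the interior of the tooth $\varphi_7(\ell^0)$ with $\delta:=\dist(p,V)>0$ and set $\epsilon_0=\delta/2$, so that $B(p,\epsilon_0)$ meets neither $V$ nor any endpoint $(\tfrac16,\tfrac z6)$ of a tooth. Taking $p_n\in\varphi_7(\ell^{6^{-n}})$ nearest to $p$ gives $p_n\to p$ and $p_n\in\mathcal C\cap B(p,\epsilon_0)$ for large $n$. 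The crux is then that inside $B(p,\epsilon_0)$ the strand through $p$ and each strand through $p_n$ lie in distinct connected components of $\mathcal C$; granting this, the component of $p$ in $\mathcal C\cap B(p,\epsilon_0)$ omits the sequence $p_n\to p$, hence is not a neighborhood of $p$, which is exactly the failure of local connectedness at $p$.

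The main obstacle is this last separation statement: I must rule out any transversal piece of $K$ near $p$ that could reconnect two distinct teeth inside $B(p,\epsilon_0)$. Since $B(p,\epsilon_0)$ lies in the interior of the cell $\varphi_7([0,1]^2)$, one has $\mathcal C\cap B(p,\epsilon_0)\subset\varphi_7(K)\cap B(p,\epsilon_0)$, and applying $\varphi_7^{-1}$ reduces the claim to describing $K$ in a small ball about the interior point $\varphi_7^{-1}(p)$ of $\ell^0$. There I would invoke the well-separatedness of the left-side segments $\{\ell^z\}$ — the vacant strips between distinct segments that form the key ingredient announced above — to conclude that locally $K$ consists only of pairwise separated slope-$4$ strands, so that the strand of $\ell^0$ and those of the $\ell^{6^{-n}}$ fall in different components of $K\cap B$. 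Verifying that these vacant strips really isolate each $\ell^z$, leaving no other part of $K$ abutting $\ell^0$, is the single computational point on which the argument rests, and it is precisely the (deferred) separation estimate for the family $\{\ell^z\}$.
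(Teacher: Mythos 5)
Your overall architecture coincides with the paper's: both build the component $\mathcal{C}$ from the vertical segment $\varphi_1(\{1\}\times[0,1])=\{\tfrac16\}\times[0,\tfrac16]$ together with the teeth $\varphi_7(\ell^z\cap[0,1]^2)$, $z\in C_2$, and both conclude by a topologist's-comb argument at an interior point of one tooth. The gap lies in the step you yourself single out as the crux. You invoke ``the well-separatedness of the left-side segments $\{\ell^z\}$'' and describe it as ``the key ingredient announced above,'' i.e.\ as the vacant-strip fact the paper defers. But the paper's deferred fact concerns only the \emph{bottom} family: there is a vacant strip avoiding $K$ between each pair of distinct segments in $\{\ell_x : x\in C_1\}$. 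No separation statement for the family $\{\ell^z : z\in C_2\}$ is asserted anywhere, and it is not a formal instance of the stated one: near the left edge each tooth $\ell^z$ threads through the cells $\varphi_1,\varphi_2$ in the recursive way recorded by the decomposition in Lemma~\ref{lem:linesegments}, so whether vacant strips isolate the teeth there needs its own argument. As written, your final separation claim rests on an unproved statement that is genuinely different from the one the paper grants itself.

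The repair is exactly the device the paper uses, and it explains the otherwise odd-looking composition $\varphi_7\varphi_4$ in its proof: place the comb argument inside the sub-cell $\varphi_7\varphi_4([0,1]^2)$ rather than at an arbitrary point of a tooth. Since $\varphi_4(\ell_{x(z)})\subset E_z\subset \ell^z\cap[0,1]^2$, inside this sub-cell the teeth restrict to $\varphi_7\varphi_4(\ell_{x(z)})$, i.e.\ to scaled copies of the bottom family; by the open set condition, $K$ meets the open sub-cell $\varphi_7\varphi_4\bigl((0,1)^2\bigr)$ only in $\varphi_7\varphi_4(K)$, so the asserted vacant strips between the $\ell_x$ transfer to vacant strips avoiding $K$ between these copies. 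Your sequence $\ell^{6^{-n}}\to\ell^0$ then becomes $x(6^{-n})=\tfrac12-\tfrac32\cdot 6^{-n}\to x(0)=\tfrac12$ in $C_1$, and the rest of your argument closes verbatim. (A smaller slip of the same kind: $\dist(p,V)=\delta$ alone does not put $B(p,\delta/2)$ inside the open cell $\varphi_7\bigl((0,1)^2\bigr)$, since $p$ may lie near the top edge of that cell where the tooth exits at $(\tfrac{5}{24},\tfrac16)$; choosing $p$ in the open sub-cell $\varphi_7\varphi_4\bigl((0,1)^2\bigr)$ and the radius smaller than its distance to that sub-cell's boundary handles this automatically.)
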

\begin{proof}
    Due to the existence of $\varphi_{19},\ldots,\varphi_{24}$, the right side $\ell_*:=\{1\}\times [0,1]$ is contained in $K$. By Lemma~\ref{lem:linesegments}, $\varphi_1(\ell_*)\cup\varphi_7(\bigcup_{z\in C_2}(\ell^z\cap[0,1]^2))$ belong to one connected component $\mathcal{C}$ of $K$. However, $\varphi_7\varphi_4(\bigcup_{x\in C_1}\ell_x)\subset\varphi_7(\bigcup_{z\in C_2}(\ell^z\cap [0,1]^2))\subset\mathcal{C}$ are infinitely many parallel line segments that are mutually `well separated' (recalling this property of $\{\ell_x:x\in C_1\}$). Moreover, for every $x\in C_1$, there is a sequence $\{x_n\}\subset C_1$ such that the distance between $\ell_{x_n}$ and $\ell_x$ goes to $0$ as $n\to\infty$.
    %Since $C_1$ is a perfect set, every $\ell_x$ (resp. $\varphi_7\varphi_8(\ell_x)$) can be approximated by a sequence $\{\ell_{x_n}\}_n$ (resp. $\{\varphi_7\varphi_8(\ell_{x_n})\}_n$) with $x_n\in C_1\setminus\{x\}$. 
    Therefore, $\mathcal{C}$ is not locally connected at any interior point of $\varphi_7\varphi_4(\ell_x)$.
\end{proof}

\bigskip
\noindent{\bf Acknowledgements.}
This research was conducted during a visit to Professor Huo-Jun Ruan at Zhejiang University. I would like to express my gratitude for his warm hospitality and helpful discussions on the topic of local connectedness.

\small
\bibliographystyle{amsplain}

\end{document}